\documentclass[11pt]{article}
 \usepackage{amsmath,amssymb,amsthm,amsfonts}
 \usepackage{epstopdf}
 \RequirePackage[dvips]{graphicx}
 \usepackage[dvips]{epsfig}
 \usepackage{color}
\usepackage[usenames,dvipsnames]{pstricks}
\usepackage{tikz}

 \def\draw #1 by #2 (#3){
  \vbox to #2{
    \hrule width #1 height 0pt depth 0pt
    \vfill
    \special{picture #3} 
    }
  }

 \def\scaleddraw #1 by #2 (#3 scaled #4){{
  \dimen0=#1 \dimen1=#2
  \divide\dimen0 by 1000 \multiply\dimen0 by #4
  \divide\dimen1 by 1000 \multiply\dimen1 by #4
  \draw \dimen0 by \dimen1 (#3 scaled #4)}
  }

\newtheorem{theorem}{Theorem}[section]
\newtheorem{example}[theorem]{Example}
\newtheorem{problem}[theorem]{Problem}

\newtheorem{defin}[theorem]{Definition}

\newtheorem{nt}{Note}

\setlength{\unitlength}{12pt}



 


 \newtheorem{rule-def}[theorem]{Rule}


\begin{document}
 \newcommand{\la}{\lambda}
 \newcommand{\si}{\sigma}
 \newcommand{\ol}{1-\lambda}
 \newcommand{\be}{\begin{equation}}
 \newcommand{\ee}{\end{equation}}
 \newcommand{\bea}{\begin{eqnarray}}
 \newcommand{\eea}{\end{eqnarray}}

\author{Kittitat Iamthong\thanks{Department of Mathematics and Statistics, University of Strathclyde, 26 Richmond Street, Glasgow,
  G1 1XH, United Kingdom}, Ji-Hwan Jung\thanks{Center for Educational Research, Seoul National University, Seoul 08826, Republic of Korea}, Sergey Kitaev\footnotemark[1] \\
{{\footnotesize
kittitat.iamthong@strath.ac.uk},\,{\footnotesize jihwanjung@snu.ac.kr},\,{\footnotesize
sergey.kitaev@strath.ac.uk}}}

\title{{Encoding labelled $p$-Riordan graphs by words and pattern-avoiding permutations}
\date{}}

\maketitle

 \begin{abstract} The notion of a $p$-Riordan graph generalizes that of a Riordan graph, which, in turn, generalizes the notions of a Pascal graph and a Toeplitz graph. In this paper we introduce the notion of a $p$-Riordan word, and show how to encode $p$-Riordan graphs by $p$-Riordan words. For special important cases of Riordan graphs (the case $p=2$) and oriented Riordan graphs (the case $p=3$) we provide alternative encodings in terms of pattern-avoiding permutations and certain balanced words, respectively. As a bi-product of our studies, we provide an alternative proof of a known enumerative result on closed walks in the cube. 

 \bigskip
 
  \noindent
 {\bf Keywords:} Riordan graph, $p$-Riordan graph, $p$-Riordan word, permutation pattern, balanced word\\
 
 \noindent{\bf Mathematics Subject Classification}:
05A05, 05A15\\

 \end{abstract}

 \section{Introduction}\label{intro}
The interplay between words and graphs has been investigated in the literature repeatedly, as discussed in \cite{KitLoz} focusing on {\em word-representable graphs} that are also surveyed in \cite{Kit2017}. Various ways to encode graphs in terms of words often allow to reveal and describe various useful properties of graphs, such as classes where difficult algorithmic problems become easy. The {\em Pr\"{u}fer sequence} giving a fascinating relationship between labelled trees with $n$ vertices and sequences of length $n-2$ made of the elements of the set $[n]:=\{1,2,\ldots,n\}$, is a classical example showing the importance of words for graph enumeration.
  
In this paper we study encoding of so-called labelled $p$-Riordan graphs by certain words, and also consider encoding a particular important case of $p=2$ (labelled Riordan graphs) by certain pattern-avoiding permutations extensively studied in the literature \cite{Kit2011}. We consider only labelled graphs, so the word ``labelled'' is omitted throughout this paper. 

We begin with introducing the notions of interest.\\

\noindent
{\bf Riordan graphs.} A {\em
Riordan matrix} \cite{shap} $L=[\ell_{ij}]_{i,j\ge0}$ generated by two formal
power series $g=\sum_{n=0}^\infty g_nt^n$ and $f=\sum_{n=1}^\infty
f_nt^n$ in ${\mathbb Z}[[t]]$ is denoted as $(g,f)$ and defined as
an infinite lower triangular matrix whose $j$-th column generating
function is $gf^j$, {\it i.e.} $\ell_{ij}=[t^i]gf^j$ where
$[t^k]\sum_{n\ge0}a_nt^n=a_k$. If $g_0\ne0$ and $f_1\ne
 0$ then the Riordan matrix is called {\it proper}.

A simple graph $G$ of order $n$ is said to be a {\em Riordan graph}
if the adjacency matrix $A(G)$ can be expressed as
\begin{equation} A(G) \equiv (tg,f)_n + (tg,f)_n^T \pmod{2} \label{riordandef}\end{equation}
for some generating functions $g$ and $f$ over $\mathbb Z$ where
$(tg,f)_n$ is the $n\times n$ leading principle matrix of the
Riordan matrix $(tg,f)$. A Riordan graph $G$ on $n$ vertices with
the adjacency matrix $A(G)$ given by (\ref{riordandef}) is denoted
as $G=G_n(g,f)$. If we let $A(G)=(a_{i,j})_{1\le i,j\le n}$, then, by (\ref{riordandef}),
for $i\ge j$,
\begin{align*}
a_{i,j}=a_{j,i}\equiv [t^{i-2}]gf^{j-1} \pmod{2}.
\end{align*}
In particular, if $[t^0]g \equiv [t^1]f \equiv 1 \pmod{2}$, then the graph $G_n(g,f)$ is called {\it proper}. We denote the set of Riordan graphs with $n$ vertices by $\mathcal{RG}_n$. When defining Riordan graphs we assume, without loss the generality, that $g$ and $f$ have binary coefficients. The number $r_n$ of Riordan graphs of order $n\ge1$ is known from  \cite{CJKM} to be 
\begin{eqnarray}\label{Riord-rn}
r_n={4^{n-1}+2\over 3}.
\end{eqnarray}

Riordan graphs were introduced in \cite{CJKM,CJKM2} and they are a far-reaching
generalization of the well-known and well studied Pascal graphs and
Toeplitz graphs, and also some other families of graphs. The Riordan
graphs are proved to have a number of interesting (fractal)
properties  \cite{CJKM}, and  spectral properties of Riordan graphs were studied in~\cite{CJKM2}.

There are several naturally defined classes/families of Riordan
graphs~\cite{CJKM,CJKM2}. A Riordan graph $G_n(g,t)$ is said to be of the 
{\em Appell type}. Riordan graphs of the Appell type are also
known as {\em Toeplitz graphs}.
Toeplitz graphs have been studied in \cite{CJKM,DTTVZZ,NP}.  A Riordan graph $G_n(g,tg)$ is said to be of the {\em Bell type} and
it has been studied in \cite{CJKM,J1}. The well-known {\em Pascal graph} $G_n\left(\frac{1}{1-t},\frac{t}{1-t}\right)$ is a particular example of a Bell type Riordan graph, and it is the only such graph of type $(1+f,f)$, also considered in our paper. Finally, a Riordan graph $G_n(f',f)$ is said to be of the {\em derivative type} and it has been studied in \cite{CJKM}. \\

\noindent
{\bf Oriented Riordan graphs and $p$-Riordan graphs.} There is a natural generalization of the notion of a Riordan graph that was introduced in \cite{J}. This notion is obtained by replacing ``mod 2'' by ``mod $p$'' in the definition of a Riordan graph. While the definition makes sense for any integer $p\geq 2$, it is normally assumed that $p$ is a prime number to resolve the invertibility issues preventing us from being able to analyze such graphs. In particular, the number $r^{(p)}_n$ of $p$-Riordan graphs for a prime $p$  was derived in \cite{J}, and it is given by  
\begin{eqnarray}\label{mod-p-formula}r^{(p)}_n=\frac{p^{2(n-1)}+p}{p+1},\end{eqnarray}
while no enumeration is known for a non-prime $p$. Setting $p=2$ in (\ref{mod-p-formula}) we recover the number of Riordan graphs given by (\ref{Riord-rn}), while setting $p=3$ in (\ref{mod-p-formula}) we obtain the number $\tilde{r}_n$ of oriented Riordan graphs of order $n$: 
\begin{eqnarray}\label{formula-mod-3}\tilde{r}_n=\frac{3^{2(n-1)}+3}{4}.\end{eqnarray} 
For $p>3$, $p$-Riordan graphs can be thought of as weighted Riordan graphs. 

In the case of a $p$-Riordan graph $G$, in some contexts it is convenient to let the elements of the adjacency matrix $A(G)$ be coming from the set $\{\lfloor
p/2\rfloor,\ldots,-1,0,1,\ldots,\lfloor p/2\rfloor\}$. However, in this paper it is more convenient to let these elements be in the set $\{0,1,\ldots,p-1\}$. In particular, oriented Riordan graphs have adjacency matrices' elements in $\{0,1,2\}$. We denote the set of $p$-Riordan graphs with $n$ vertices by $\mathcal{RG}_{n}^{(p)}$. \\

\noindent
{\bf Patterns in permutations.} We think of permutations to be written in the one-line notation, and a permutation of length $n$ is called an $n$-permutation. The reduce form of a permutation $\pi$ is the permutation red$(\pi)$ obtained from $\pi$ by substituting the $i$-th smallest element by $i$. For example, red(4287)=2143.

A {\em pattern} is a permutation. We say that a pattern $\tau=\tau_1\cdots\tau_k$ {\em occurs} in a permutation $\pi=\pi_1\cdots\pi_n$ if there are indices $1\leq i_1<i_2<\cdots<i_k\leq n$ such that red$(\pi_{i_1}\pi_{i_2}\cdots\pi_{i_k})=\tau$. For example, the permutation $14235$ has four occurrence of the pattern 123, namely, the subsequences 145, 125, 135 and 235, while the permutation 34512 {\em avoids} the pattern 132 (has no occurrences of it). Permutation patterns have been the subject of extensive research in the literature \cite{Kit2011}.

A well-known result \cite{Kit2011} states that the number of $n$-permutations that avoid simultaneously the patterns 123 and 132 is $2^{n-1}$. This can be proved using a bijection $\psi$ from the set of binary words of length $n-1$ to the set of restricted $n$-permutations that is  denoted by $S_n(123,132)$. Think of constructing a permutation in $S_n(123,132)$ by inserting elements $1, 2,\ldots, n$ one by one, starting from 1 and continuing with the least available element, into $n$ empty slots and observing that at each step an element $i$ can only be inserted in one of the two rightmost  empty slots to avoid the patterns 123 and 132; the element $n$ will be inserted in a unique way. Thus, given a binary word $b_1b_2\cdots b_{n-1}$, $\psi$ uses the process of insertion, and  places the element  $i$ to the left (resp., right) available slot if $b_i=0$ (resp., 1). For example, $\psi(011001)=7546312$. \\

\noindent
{\bf $p$-Riordan words.} We define the following set of words that we call {\em $p$-Riordan words} because they will be proved by us in Section~\ref{p-Riordan-sec} to be in 1-to-1 correspondence with $p$-Riordan graphs.  A $p$-Riordan word $w_1w_2\cdots w_n$, $n\geq 1$, is a word over the alphabet $A^{(p)}=\{a_{i,j}:0\leq i,j\leq p-1\}$ such that either it is $a_{0,0}a_{0,0}\cdots a_{0,0}$ or there exist $i$ and $b$, $1\leq i\leq n$, $1\leq b\leq p-1$, such that $w_1\cdots w_i=a_{0,0}a_{0,0}\cdots a_{0,0}a_{b,0}$. By definition, the empty word $\varepsilon$ is a $p$-Riordan word of length $0$. The set of $p$-Riordan words of length $n$ is denoted by $\mathcal{W}_n^{(p)}$. For example, letting $a=a_{0,0}$, $b=a_{1,0}$, $c=a_{0,1}$ and $d=a_{1,1}$, $\mathcal{W}_3^{(2)}=\{aaa,aab,aba,abb,abc,abd,baa,bab,bac,bad,bba,bbb,bbc,bbd,bca,$ \\ $bcb,bcc,bcd,bda,bdb,bdc,bdd\}.$\\

\noindent
{\bf Organization of the paper}. This paper is organized as follows. In Section~\ref{p-Riordan-sec} we present our main result, namely, encoding  $p$-Riordan graphs by $p$-Riordan words. In Section~\ref{pattern-avoiding-sec} we consider encoding Riordan graphs by pattern-avoiding permutations. In Section~\ref{orient-Riord-graphs} we encode oriented Riordan graphs by balanced words over the alphabet $\{0,1,2\}$, and provide, as a bi-product, a proof of a known enumerative result related to the formula (\ref{formula-mod-3}). Finally, in Section~\ref{concl-sec} we make concluding remarks and state open problems for further research.

\section{Encoding $p$-Riordan graphs by $p$-Riordan words}\label{p-Riordan-sec}
In this section, we will present a bijective map $\xi:\mathcal{RG}_{n+1}^{(p)}\rightarrow\mathcal{W}_{n}^{(p)}$ for $n\geq 0$ and discuss its applications in the case of Riordan graphs (the case $p=2$). 

\begin{theorem}\label{thm-xi}
There is bijection between $\mathcal{RG}_{n+1}^{(p)}$ and $\mathcal{W}_{n}$.
\end{theorem}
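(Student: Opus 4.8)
The plan is to construct the map $\xi:\mathcal{RG}_{n+1}^{(p)}\to\mathcal{W}_n^{(p)}$ directly from the block structure of the adjacency matrix and show it is a bijection by exhibiting an explicit inverse. The key observation is that a $p$-Riordan graph $G=G_{n+1}(g,f)$ is completely determined modulo $p$ by the pair $(g,f)$ only through the finitely many coefficients $[t^k]g$ for $0\le k\le n-1$ and $[t^k]f$ for $1\le k\le n-1$, since the entry $a_{i,j}\equiv[t^{i-2}]gf^{j-1}\pmod p$ for $i\ge j$ inside the $(n+1)\times(n+1)$ leading principal block only ever sees these coefficients. So the free data defining an element of $\mathcal{RG}_{n+1}^{(p)}$ is a pair consisting of a truncated series $g_0+g_1t+\cdots$ and $f_1t+f_2t^2+\cdots$; however, not all such pairs give distinct graphs — this is precisely why $r_n^{(p)}=\frac{p^{2(n-1)}+p}{p+1}$ rather than $p^{2(n-1)}$, and the collapse happens exactly when $g\equiv 0$, in which case $f$ is irrelevant and the graph is the empty graph regardless of $f$. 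This matches the ``$a_{0,0}a_{0,0}\cdots a_{0,0}$'' clause in the definition of a $p$-Riordan word.

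First I would define $\xi$ on $G=G_{n+1}(g,f)$ by setting $w_k=a_{[t^{k-1}]g,\,[t^k]f}$ for $1\le k\le n$ — that is, the $k$-th letter records the pair $(g_{k-1}\bmod p, f_k\bmod p)$. I would then check that the output genuinely lands in $\mathcal{W}_n^{(p)}$: if $g\equiv 0\pmod p$ then every $g_{k-1}\equiv 0$ and, because the graph (hence the word) does not remember $f$, we must normalize $f$ to $0$ as well, giving $w=a_{0,0}\cdots a_{0,0}$; otherwise let $i$ be minimal with $g_{i-1}\not\equiv 0$, so $w_1\cdots w_{i-1}=a_{0,0}\cdots a_{0,0}$ and $w_i=a_{b,0}$ with $b=g_{i-1}\not\equiv 0$ — here one uses that $f_1,\ldots,f_{i-1}$ can be taken to be $0$ because the first $i-1$ coefficients of $f$ do not affect $gf^{j-1}$ up to the relevant order once $g$ has a zero of order $\ge i-1$; more precisely I would argue that changing $f_k$ for $k<i$ does not change any entry of the $(n+1)\times(n+1)$ block, so the canonical representative has those coefficients zero. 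That $w_i=a_{b,0}$ then follows, and the remaining letters $w_{i+1},\ldots,w_n$ are unconstrained, so $w\in\mathcal{W}_n^{(p)}$.

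For the inverse, given $w=w_1\cdots w_n\in\mathcal{W}_n^{(p)}$ with $w_k=a_{p_k,q_k}$, I would set $g=\sum_{k\ge 1}p_k t^{k-1}$ (truncated, with any tail zero) and $f=\sum_{k\ge 1}q_k t^k$, form $G_{n+1}(g,f)$, and check $\xi(G_{n+1}(g,f))=w$. The defining shape of a $p$-Riordan word guarantees that if $g\not\equiv 0$ then its lowest nonzero coefficient sits in degree $i-1$ and the corresponding $f$-coefficients in degrees $<i$ are $0$, which is exactly the normalized form, so no further collapsing occurs; and if $g\equiv 0$ then $w$ is all $a_{0,0}$ and maps to/from the empty graph consistently. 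Injectivity and surjectivity of $\xi$ then reduce to the statement that two pairs $(g,f)$ and $(g',f')$ in normalized form give the same graph iff they give the same word, which is immediate from the entry formula $a_{i,j}\equiv[t^{i-2}]gf^{j-1}$.

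The main obstacle I expect is the bookkeeping around the normalization of $f$: showing precisely that the coefficients $f_1,\ldots,f_{i-1}$ (where $i-1$ is the order of vanishing of $g$) are irrelevant to the graph, and that this is the \emph{only} redundancy among pairs $(g,f)$. One clean way to handle this is to count: the words in $\mathcal{W}_n^{(p)}$ number $p^{2(n-1)}\cdot\frac{1}{\,}$... concretely, $|\mathcal{W}_n^{(p)}| = p^{n}\cdot p^{\,?}$ — one computes directly from the definition that $|\mathcal{W}_n^{(p)}| = \frac{p^{2n}-p^n}{p^2-p}\cdot(p-1) + 1$ type expression, and one verifies this equals $r_{n+1}^{(p)}=\frac{p^{2n}+p}{p+1}$ using \eqref{mod-p-formula}; then since $\xi$ is a well-defined injection between finite sets of equal cardinality it is automatically a bijection, sidestepping a direct surjectivity argument. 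I would carry out that cardinality check as the final step.
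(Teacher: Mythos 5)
There is a genuine gap, and it sits at the heart of your construction. Your normalization claim --- ``changing $f_k$ for $k<i$ does not change any entry of the $(n+1)\times(n+1)$ block'' when $g$ vanishes to order $b^*=i-1$ --- is false; in fact the redundancy in $f$ is at the \emph{opposite} end. Since $a_{i,j}\equiv[t^{i-2}]gf^{j-1}\pmod p$ and $g$ has order $b^*$ while $f$ has order $\ge 1$, the entries of the order-$(n+1)$ matrix involve exactly the coefficients $f_1,\ldots,f_{n-1-b^*}$; it is the \emph{high}-degree coefficients $f_{n-b^*},f_{n-b^*+1},\ldots$ that are irrelevant, while the low-degree ones matter very much. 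Concretely, for $p=2$ and order $5$ take $g=t$ (so $b^*=1$, your $i=2$): then $a_{4,2}=[t^2](gf)=f_1$, so $G_5(t,t)$ and $G_5(t,0)$ are different graphs, yet your normalization ``set $f_1=0$'' identifies them. Hence your map $\xi$ is not well defined on graphs (and the claimed injectivity has no chance). There is also a related structural problem: with the unshifted pairing $w_k=a_{g_{k-1},f_k}$, even after the \emph{correct} normalization (zeroing the irrelevant high-degree $f$-coefficients) the letter in position $b^*+1$ is $a_{g_{b^*},f_{b^*+1}}$ with $f_{b^*+1}$ genuinely free, so the image does not consist of $p$-Riordan words at all; the constraint your words acquire sits at the end of the word, not at the beginning as the definition of $\mathcal{W}_n^{(p)}$ requires.

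The cardinality finish cannot rescue this, because it presupposes a well-defined injection; moreover the formula you sketch for $|\mathcal{W}_n^{(p)}|$ (of the shape $\frac{p^{2n}-p^n}{p^2-p}(p-1)+1$) is not correct, although the equality $|\mathcal{W}_n^{(p)}|=r_{n+1}^{(p)}$ you are aiming for is true. The repair is essentially the paper's encoding: observe that a graph of order $n+1$ with $g$ of order $b^*$ is determined by $g_{b^*},\ldots,g_{n-1}$ and $f_1,\ldots,f_{n-1-b^*}$, and pair the $f$-coefficients with positions \emph{shifted by} $b^*$, i.e.\ send the graph to $a_{0,0}\cdots a_{0,0}\,a_{g_{b^*},0}\,a_{g_{b^*+1},f_1}\cdots a_{g_{n-1},f_{n-1-b^*}}$ (with the all-$a_{0,0}$ word reserved for the empty graph). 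This lands exactly on $\mathcal{W}_n^{(p)}$, the counts of both sides with a fixed number of leading $a_{0,0}$'s match letter-for-coefficient, and well-definedness plus bijectivity follow either directly or, as in the paper, from the matching recursions $s^{(p)}_{m+1}=p^2(s^{(p)}_m-1)+p$ and $r^{(p)}_{m+2}=p^2(r^{(p)}_{m+1}-1)+p$ with equal initial conditions.
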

\begin{proof}
By definition, a $p$-Riordan graph $G=G^{(p)}_n(g,f)$ of order $n$ can be determined by the coefficients $g_{b^*},g_{b^*+1},\ldots,g_{n-2},f_1,f_2,\ldots,f_{n-b^*-2}$ where $b^*$ is the smallest $i\in\{0,1,\ldots,n-2\}$ such that $g_i\neq 0$; if no such $i$ exists then $G$ is an empty graph (a graph with no edges). Thus, $G^{(p)}_n(g,f)=G^{(p)}_n(\tilde{g},\tilde{f})$ where $\tilde{g}$ and $\tilde{f}$ are the following polynomials:
\begin{itemize}
\item $\tilde{g}:=\displaystyle\sum_{i=b^*}^{n-2}g_{i}t^i$ and $\displaystyle\tilde{f}:=\sum_{i=1}^{n-2-b^*}f_{i}t^i$ if $b^*$ is defined;
\item $\tilde{g}=\tilde{f}=0$ if $b^*$ does not exist.
\end{itemize}

If $s^{(p)}_n=|\mathcal{W}_n^{(p)}|$ then $s^{(p)}_1=p$ and 
\begin{eqnarray}\label{spn-formula} s^{(p)}_{n+1}=p^2(s^{(p)}_n-1)+p.\end{eqnarray}
Indeed, $\mathcal{W}_1^{(p)}=\{a_{0,i}:0\leq i\leq p-1\}$ and to generate all elements in $\mathcal{W}_{n+1}^{(p)}$ we can extend every word in $\mathcal{W}_{n}^{(p)}$ different from $a_{0,0}\cdots a_{0,0}$ by any letter in $A^{(p)}$ (which explains the term $p^2(s^{(p)}_n-1)$), and the remaining $p$ elements in $\mathcal{W}_{n+1}^{(p)}$ are given by $\{a_{0,0}\cdots a_{0,0}a_{b,0}: 0\leq b\leq p-1\}$. 

We note that $r^{(p)}_n$ satisfies the same recursion as (\ref{spn-formula}), namely, \begin{eqnarray}\label{rpn-rec-formula} r^{(p)}_{n+1}=p^2(r^{(p)}_n-1)+p\end{eqnarray}
with the initial condition $r^{(p)}_2=p$, which provides an alternative proof of (\ref{mod-p-formula}).  Indeed, the initial condition counts the $p$ graphs on 2 vertices given by $\tilde{g}_0=i$, $0\leq i\leq p-1$. Also, to generate all graphs in $\mathcal{RG}_{n+1}^{(p)}$, we can take any non-empty graph in $\mathcal{RG}_{n}^{(p)}$ given by $\tilde{g}\neq 0$ and $\tilde{f}$, and choose independently $g_{n-1}$ and $f_{n-b^*-1}$ in $\{0,1,\ldots, p-1\}$ ($b^*$ is defined). This explains the term ``$p^2(r^{(p)}_n-1)$'' in (\ref{rpn-rec-formula}). The only uncounted graphs in $\mathcal{RG}_{n+1}^{(p)}$ are those given by $\tilde{g}=it^{n-1}$ where $0\leq i\leq p-1$, which explains the term ``$+p$''.

For the only graph on one vertex, $\xi(G_{1}^{(p)}(0,0))=\varepsilon$, and for $n\geq 2$, $\xi(G_{n}^{(p)}(g,f))$ is defined by \\[-5mm]
$$a_{g_0,f_{n-2}}a_{g_1,f_{n-3}}\cdots a_{g_{b^*-1},f_{n-3-b^*}} a_{g_{b^*},f_0}a_{g_{b^*+1},f_1}a_{g_{b^*+2},f_2}\cdots a_{g_{n-2},f_{n-2-b^*}}=$$
$$a_{0,0}a_{0,0}\cdots a_{0,0} a_{1,0}a_{g_{b^*+1},f_1}a_{g_{b^*+2},f_2}\cdots a_{g_{n-2},f_{n-2-b^*}}$$ where $g_i=[t^i]\tilde{g}$ and $f_j=[t^j]\tilde{f}$. The fact that $\xi$ is well-defined and bijective essentially follows from our proofs of (\ref{spn-formula}) and (\ref{rpn-rec-formula}).
\end{proof}

The bijection $\xi$ given in Theorem~\ref{thm-xi} allows us to encode by words three classes of Riordan graphs (the case $p=2$).  As above, we let $\mathcal{W}_n^{(2)}$ be words over $\{a,b,c,d\}$, where $a=a_{0,0}$, $b=a_{1,0}$, $c=a_{0,1}$ and $d=a_{1,1}$,  Justifications of our encodings are straightforward from the properties of $\xi$.\\

\noindent 
{\bf The class $(1+f,f)$.} In this case, $g=1+f$. Since $[t^0]g=1$ and $[t^i]g=[t^i]f$ for each $i\in[n-2]$, we have that $b^*=0$ and the words in $\mathcal{W}_n^{(2)}$  corresponding to these graphs are those of the form $bx_1x_2\cdots x_{n-1}$ where $x_i\in\{a,d\}$.\\

\noindent
{\bf Proper Riordan graphs.} For a proper Riordan graph $g_0= f_1 = 1$, so $b^*=0$ and the words in $\mathcal{W}_n^{(2)}$ corresponding to these graphs are those beginning either with $bc$ or with $bd$ for $n\geq 2$, and $\mathcal{W}_1^{(2)}=\{b\}$.\\

\noindent 
{\bf Riordan graphs of the Appel type.} For these graphs $f=t$, and thus the words in $\mathcal{W}_n^{(2)}$ corresponding to these graphs are of the form $aa\cdots a$, $aa\cdots ab$ and $aa\cdots abxw$ for $x\in\{c,d\}$ and $w$ a word over the alphabet $\{a,b\}$. \\

\section{Encoding Riordan graphs by pattern-avoiding permutations}\label{pattern-avoiding-sec}

The sequence $(r_n)_{n\ge0}$  counting Riordan graphs and given by (\ref{Riord-rn}) has various combinatorial interpretations recorded in
A047849 in the {\em On-Line Encyclopedia of Integer Sequences} ({\em OEIS})
\cite{oeis}. Two of these interpretations are related to pattern-avoiding permutations: 
\begin{itemize}
\item[(I1)] $r_n$ counts permutations in $S_{2n}(123, 132)$ with two fixed points; the set of such permutations is denoted by $P_{2n}$; see \cite{MR}.
\item[(I2)] $r_n$ counts permutations in $S_n(4321,4123)$, or in $S_n(4321, 3412)$, or in $S_n(4123,3214)$, or in $S_n(4123,2143)$; see \cite{KW}.
\end{itemize}

In this paper, we explain combinatorially a connection between Riordan graphs and (I1). Moreover,  we will show which permutations, under the bijection we will construct, correspond to Riordan graphs of the Appell type, of the Bell type (including the Pascal graph), of the derivative type, as well as of the types given by $(1+f,f)$ and $(g,0)$ (a star graph with a number of isolated vertices), and to proper Riordan graphs. We leave explaining the second bullet point combinatorially as an open problem.

\begin{figure}
\begin{center}
\psscalebox{0.5 0.5} 
{
\begin{pspicture}(0,-5.2)(10.56,5.2)
\definecolor{colour0}{rgb}{0.8,0.8,0.8}
\psline[linecolor=black, linewidth=0.04](0.38,2.0)(10.38,2.0)
\psline[linecolor=black, linewidth=0.04](0.38,-1.6)(10.38,-1.6)
\psline[linecolor=black, linewidth=0.04](3.58,5.2)(3.58,-4.8)
\psline[linecolor=black, linewidth=0.04](6.78,5.2)(6.78,-4.8)
\psframe[linecolor=black, linewidth=0.04, fillstyle=solid,fillcolor=colour0, dimen=outer](1.98,5.2)(0.38,2.0)
\psframe[linecolor=black, linewidth=0.04, fillstyle=solid,fillcolor=colour0, dimen=outer](3.58,2.0)(1.98,-1.6)
\psframe[linecolor=black, linewidth=0.04, fillstyle=solid,fillcolor=colour0, dimen=outer](6.78,-1.6)(3.58,-3.2)
\psframe[linecolor=black, linewidth=0.04, fillstyle=solid,fillcolor=colour0, dimen=outer](10.38,-3.2)(6.78,-4.8)
\rput[bl](0.02,-4.92){\huge$1$}
\rput{-0.33752647}(0.030576011,0.020646136){\rput[bl](3.52,-5.45){\huge$a$}}
\rput[bl](6.68,-5.45){\huge$b$}
\rput[bl](10.2,-5.45){\huge$2n$}
\rput[bl](-0.1,-1.66){\huge$a$}
\rput[bl](-0.1,1.9){\huge$b$}
\rput[bl](-0.4,4.98){\huge$2n$}
\psframe[linecolor=black, linewidth=0.04, dimen=outer](10.38,5.2)(0.38,-4.8)
\psdots[linecolor=black, dotsize=0.3](3.58,-1.6)
\psdots[linecolor=black, dotsize=0.3](6.78,2.0)
\rput[bl](0.9,3.32){\huge$A$}
\rput[bl](2.5,-0.14){\huge$B$}
\rput[bl](4.9,-2.67){\huge$C$}
\rput[bl](8.2,-4.3){\huge$D$}
\end{pspicture}
}
\end{center}
\caption{The structure of a permutation in $P_{2n}$}\label{fig:P2n}
\end{figure}
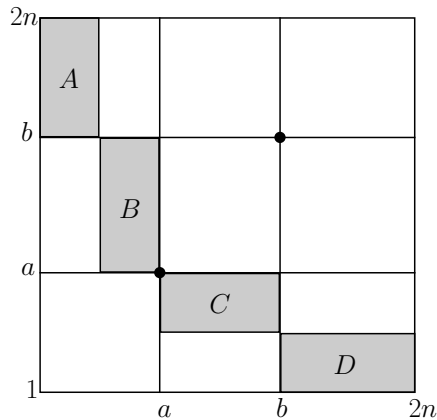

\begin{theorem}\label{thm1}
There is a bijection between $\mathcal{RG}_n$ and $P_{2n}$.
\end{theorem}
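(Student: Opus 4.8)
The plan is to route the bijection through the combinatorics of compositions. First I recall (and would briefly reprove by repeatedly extracting the largest entry) the block structure of $S_m(123,132)$: the fact that $\pi\in S_m(123,132)$ avoiding $132$ forces $\pi=\alpha\, m\,\beta$ with every entry of $\alpha$ larger than every entry of $\beta$, and that additionally avoiding $123$ forces $\alpha$ to be decreasing; iterating, $\pi$ is encoded by a composition $(c_1,\dots,c_r)$ of $m$ as follows. With $p_0=0$ and $p_i=c_1+\cdots+c_i$, the $i$-th block occupies positions $p_{i-1}+1,\dots,p_i$ and carries the $c_i$ largest not-yet-used values, arranged as ``second largest, third largest, $\dots$, smallest, largest''; so position $p_{i-1}+t$ gets value $v_i-t$ for $1\le t\le c_i-1$ and the cap position $p_i$ gets $v_i:=m-p_{i-1}$. (This is merely a reindexing of $\psi$: the internal partial sums $p_1,\dots,p_{r-1}$ of the composition are exactly the indices $j$ with $b_{m-j}=1$, where $\psi(b_1\cdots b_{m-1})=\pi$.)

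The crux is then a direct fixed-point computation from this description, with $m=2n$. A non-cap position $p_{i-1}+t$ is fixed iff $v_i-t=p_{i-1}+t$, i.e.\ iff it equals $n$; and the cap $p_i$ is fixed iff $p_i=v_i=2n-p_{i-1}$, i.e.\ iff $p_{i-1}+p_i=2n$. A one-line monotonicity argument shows at most one index $i$ can satisfy $p_{i-1}+p_i=2n$, and when such an $i$ exists the pair straddles $n$, so $n$ is not a partial sum and position $n$ is automatically fixed. Hence $\pi\in P_{2n}$ iff there is a (then unique) $i$ with $p_{i-1}+p_i=2n$; setting $d:=p_{i-1}\in\{0,1,\dots,n-1\}$ this says exactly that the composition factors uniquely as $(\alpha,\,2n-2d,\,\beta)$ with $\alpha,\beta$ compositions of $d$. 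In the language of Figure~\ref{fig:P2n}, $\alpha$ governs the corner block $A$ (early positions carrying the top $d$ values), $\beta$ governs the corner block $D$ (late positions carrying the bottom $d$ values), and the central staircase through the two fixed points $n$ and $2n-d$ is forced. Counting, $|P_{2n}|=\sum_{d=0}^{n-1}(\#\{\text{compositions of }d\})^2=1+\sum_{d=1}^{n-1}4^{\,d-1}=\tfrac{4^{n-1}+2}{3}=r_n$, which already re-proves $(\ref{Riord-rn})$ combinatorially.

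Finally I match this with $\mathcal{RG}_n$. By the $p=2$ case of Theorem~\ref{thm-xi}, a Riordan graph on $n$ vertices is either the empty graph or is determined by the index $b^*\in\{0,\dots,n-2\}$ of the first nonzero coefficient of $g$ together with the two binary strings $g_{b^*+1}\cdots g_{n-2}$ and $f_1\cdots f_{n-b^*-2}$, both of length $n-2-b^*$. Put $d:=n-1-b^*$ (and $d:=0$ for the empty graph); then $d$ runs over $\{0,\dots,n-1\}$ and the common length is $d-1$, so each of the two strings, under the standard bijection between binary words of length $d-1$ and compositions of $d$, becomes a composition of $d$, one playing the role of $\alpha$ and the other of $\beta$. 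Sending $G$ to the permutation of $P_{2n}$ with composition $(\alpha,\,2n-2d,\,\beta)$ (and the empty graph to the permutation with composition $(2n)$) is a bijection, being the composite of the bijections: Riordan graphs $\leftrightarrow$ (a choice of $b^*$ or ``empty'' together with two binary strings of length $d-1$) $\leftrightarrow$ ($d\in\{0,\dots,n-1\}$ together with an ordered pair of compositions of $d$) $\leftrightarrow$ (compositions of $2n$ with the factorization property) $\leftrightarrow$ $P_{2n}$.

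The step I expect to be the main obstacle is the structural lemma of the second paragraph: verifying that the fixed points of a block-structured permutation are precisely position $n$ (when it is not a cap) and the caps $p_i$ with $p_{i-1}+p_i=2n$, and that the ``symmetric'' pair of partial sums is unique. Everything after that is assembly of standard bijections; a minor additional care is to keep $\alpha$ and $\beta$ matched to the correct corner blocks $A$, $D$ of Figure~\ref{fig:P2n}, so that the subsequent identification of the Appell, Bell, derivative, $(1+f,f)$, $(g,0)$, and proper subclasses goes through cleanly.
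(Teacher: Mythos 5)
Your proof is correct, but it proceeds differently enough from the paper's to be worth contrasting. You encode all of $S_{2n}(123,132)$ by compositions of $2n$ and locate the fixed points by a direct computation (a non-cap position is fixed iff it equals $n$; a cap $p_i$ is fixed iff $p_{i-1}+p_i=2n$, and monotonicity of $p_{i-1}+p_i$ gives uniqueness), so that $P_{2n}$ is exactly the set of compositions factoring as $(\alpha,\,2n-2d,\,\beta)$ with $\alpha,\beta$ compositions of $d\in\{0,\dots,n-1\}$; the paper instead reads the structure off the permutation diagram (Figure~\ref{fig:P2n}), deriving $a=n$ by counting the elements in the regions $A,B,C,D$. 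On the graph side both arguments use the same parametrization of $\mathcal{RG}_n$ by $b^*$ together with the two binary tails of $g$ and $f$ (your appeal to the $p=2$ case of Theorem~\ref{thm-xi} is legitimate, as it precedes Theorem~\ref{thm1}), and your $d=n-1-b^*$ matches the paper's block size $2n-b$ with $b=b^*+n+1$, so the resulting bijection is essentially the paper's $\phi$ up to the choice of the binary-word-to-composition dictionary. What your route buys: it is enumeration-self-contained --- surjectivity comes from your explicit characterization of $P_{2n}$, and you recover $|P_{2n}|=\frac{4^{n-1}+2}{3}$ along the way, whereas the paper proves $\phi$ well-defined and injective and then invokes the known count $|P_{2n}|=r_n$ from \cite{MR} to conclude bijectivity; so your argument simultaneously reproves interpretation (I1) and formula (\ref{Riord-rn}). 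What the paper's route buys: the map is written explicitly in terms of $\psi$, which is what the subsequent subclass descriptions (Appell, Bell, derivative, $(1+f,f)$, proper) are phrased in; as you note, to recover those you must fix the word-to-composition bijection compatibly with $\psi$ and keep the $g$-data in block $A$ and the $f$-data in block $D$. One small point to make explicit when writing this up: two fixed points force a cap fixed point (since the only possible non-cap fixed point is position $n$), which is the converse direction of your ``$\pi\in P_{2n}$ iff such an $i$ exists''; it is immediate from your fixed-point computation but should be said.
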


\begin{proof} Let $\pi=p_1p_2\cdots p_{2n}$ and $a$ and $b$ be the two fixed points in $\pi$ where $1\le a<b\le 2n$. Since $\pi$ avoids the patterns $123$ and $132$, the remaining $2n-2$ elements can only be placed in  the areas $A$, $B$, $C$ and $D$ in Figure~\ref{fig:P2n}, where we show $\pi$ schematically as a permutation diagram. In addition, the elements in $B$ and $C$ must be in decreasing order, while $A$ (resp., $D$) avoids 123 and 132, and is independent from the rest of $\pi$ in the sense that no occurrence of 123 or 132 can start in $A$ (resp., $D$) and end elsewhere. Since the number of elements in $B$ and $C$ is $b-a-1$, we obtain that the number of elements in $D$, which is the same as the number of elements in $A$, is
\begin{align*}
2n-b=a-1-(b-a-1)\quad\Rightarrow\quad a=n.
\end{align*}
Thus, $B$ and $C$ have the same number of elements $b-n-1$, and $\pi$ satisfies 
\begin{itemize}
\item $p_1p_2\cdots p_{2n-b}$ and $p_{b+1}p_{b+2}\cdots p_{2n}$ avoid the patterns 123 and 132;
\item $p_{2n-b+i}=b-i$ for each $i=1,2,\ldots,2b-2n-1$.
\end{itemize}
The desired map $\phi:\mathcal{RG}_n\rightarrow P_{2n}$,  is defined as follows where $\phi(G_n(g,f))=p_1p_2\cdots p_{2n}$ and  the function $\psi$ is defined in Section~\ref{intro}:
\begin{itemize}
\item $p_n=n$ and $p_{b}=b$ where $b=b^*+n+1$ and $b^*=\min\{i\mid [t^i]g=1\}$  if $g\neq0$ and $b=2n$ if $g=0$;
\item $p_{2n-b+i}=b-i$ for each $i\in[2b-2n-1]$;
\item $\psi(g_{b^*+1}g_{b^*+2}\cdots g_{n-2})=(p_{1}-b)\cdots (p_{2n-b}-b)$ so that to obtain $p_{1}\cdots p_{2n-b}$ we apply $\psi$ to $g_{b^*+1}g_{b^*+2}\cdots g_{n-2}$ and then increase each element in the obtained permutation by $b$ (lengths are proper here since $b=b^*+n+1$);
\item $\psi(f_1f_2\cdots f_{2n-b-1})=p_{b+1}\cdots p_{2n}$.
\end{itemize}
It is not difficult to see that $\phi$ is well-defined and injective. Thus, since $|\mathcal{RG}_n|=|P_{2n}|$, $\phi$ is bijective. For example, $\phi(G_5(t+t^3,t^2))=(10)896547312$ and $\phi(G_5(t,t+t^2))=98(10)6547321$. In particular, in both cases $n=5$, $2n=10$ and $b^*=1$ so that $b=7$. 
\end{proof}

The bijection $\phi$ given in Theorem~\ref{thm1} allows us to describe subclasses of Riordan graphs in terms of pattern-avoiding permutations. In our description we refer to $A$ and $D$ to be the parts of a permutation $\pi=p_1\cdots p_{2n}$ schematically given in Figure~\ref{fig:P2n}. That is, $A=p_1\cdots p_{2n-b}$ and $D=p_{b+1}\cdots p_{2n}$.  Justifications of our descriptions are usually straightforward from the properties of $\phi$, but in some places we still provide various clarifications. In what follows recall that $b=b^*+n+1$.\\

\noindent 
{\bf The class $(1+f,f)$.} In this case, $g=1+f$. Since $[t^0]g=1$ and $[t^i]g=[t^i]f$ for each $i\in[n-2]$, we have that $b^*=0$ and the permutations in $P_{2n}$ corresponding to these graphs have the fixed points $n$ and $n+1$, and red$(A)=D$.\\

\noindent
{\bf Proper Riordan graphs.} For a proper Riordan graph $g_0= f_1 = 1$, so $b^*=0$ and the permutations in $P_{2n}$ corresponding to these graphs have two fixed points $n$ and $n+1$,  and the minimal element in $D$ is in the last place.\\

\noindent 
{\bf Riordan graphs of the Appel type.} For these graphs $f=t$ and thus the permutations in $P_{2n}$ describing them are those for which $D=(2n-b-1)(2n-b-2)\cdots 32(2n-b)1$. \\

\noindent
{\bf Riordan graphs of the Bell type.} Such graphs are given by $(g,tg)$ where either $g=0$ or $g_0=g_1=\cdots=g_{b^*-1}=0$ and $g_{b^*}=1$ for $b^*\geq 0$. In the former case, we deal with the empty graph corresponding to $(2n-1)(2n-2)\cdots 1(2n)$. For the latter case, since  $\tilde{f}=g_0t+g_1t^2+\cdots +g_{n-2-b^*}t^{n-1-b^*}$, we can conclude that permutations in $P_{2n}$ corresponding to the Riordan graphs of the Bell type are those having the $b^*+1$ rightmost elements of $D$ forming the permutation $b^*(b^*-1)\cdots 21(b^*+1)$. We note that the Pascal graph $\left(\frac{1}{1-z},\frac{z}{1-z}\right)$ is the only graph of the Bell type that belongs to the class $(1+f,f)$. For this graph $b^*=0$, $b=n+1$, and the permutation in $P_{2n}$ corresponding to it is $(2n)(2n-1)\cdots (n+2)n(n+1)(n-1)(n-2)\cdots 1$.\\

\noindent
{\bf Riordan graphs of the derivative type.} This class of graphs is of the form $(f',f)$ so $g=f'=f_1+f_3t^2+f_5t^4+\cdots$. Thus,  permutations corresponding to such graphs can be described algorithmically by imposing the following restrictions on $A$: in implementing the bijection $\psi$ every even step must place the current element into the left out of two valid slots. Thus, if this rule is not violated, the obtained permutation corresponds to a Riordan graph of the derivative type.\\

\section{Encoding oriented Riordan graphs by balanced words over $\{0,1,2\}$}\label{orient-Riord-graphs}

The  sequence $\tilde{r}_n$ counting oriented Riordan graphs $\mathcal{RG}^{(3)}_n$ and given by (\ref{formula-mod-3}) is A054879 in \cite{oeis}, one of whose interpretations is the number of words of length $2n$ on alphabet $\{0,1,2\}$ with an even number (possibly zero) of each letter. We call these words {\em balanced words} and denote the respective set by $\mathcal{B}_n$. Also, let $b_n:=|\mathcal{B}_n|$. The words in $\mathcal{B}_n$ are clearly in 1-to-1 correspondence with closed walks of length $2n$ along the edges of the $3$-cube (cube) starting at the origin. Such walks also mentioned in A054879. The correspondence is given by letting a walk take the $i$-th step in direction $x$, $1\leq x\leq 3$ (that is, swap the $i$-th coordinate from 0 to 1, or vice versa) in the case if the $i$-th letter in the word is $x-1$. The formula for $b_n$ (the same as (\ref{formula-mod-3})) is given in A054879 in \cite{oeis}, but its derivation would involve simplifying a more general result in \cite{Reyzin}
$$\frac{1}{2^n}\sum_{j=0}^{n}{n\choose j}(n-2j)^r$$
after placing $n=3$ there. As a bi-product, in this section we obtain an alternative justification of  (\ref{formula-mod-3}) be valid for $b_n$ after explaining combinatorially the recursion (\ref{rec-2-b}).

To build a recursive encoding in question, we will use an alternative, inductive way to prove formula (\ref{formula-mod-3}) via first explaining combinatorially the recursion 
\begin{eqnarray}\label{rec-1-o}\tilde{r}_{n+1}=3\tilde{r}_n+6(\tilde{r}_n-1),\end{eqnarray} 
which is a particular case of $p=3$ in (\ref{rpn-rec-formula}) proved above. Next, we explain \begin{eqnarray}\label{rec-2-b}b_{n+1}=3b_n+6(b_n-1),\end{eqnarray} which, again, will provide an alternative proof for the formula (\ref{formula-mod-3}) for $b_n$ given in A054879. \\

\noindent
{\bf Explanation of (\ref{rec-1-o}).} Each graph $G_{n+1}(g,f)$ in  $\mathcal{RG}^{(3)}_{n+1}$ either has
\begin{itemize}
\item $f_{n-b^*-1}=0$ (in the case $b^*$ is not defined we can assume $\tilde{f}=0$), 
and there are $3\tilde{r}_n$ ways to select such $G$ , where ``3'' is the number of choices for $g_{n-1}$, or 
\item $f_{n-b^*-1}\in\{1,2\}$ and $g_0g_1\ldots g_{n-2}\neq 00\cdots 0$, so that there are 3 choices for $g_{n-1}$ and in total $6(b_n-1)$ ways to select such a $G$. 
\end{itemize}
This completes the proof of (\ref{rec-1-o}).\\ 

\noindent
{\bf Explanation of (\ref{rec-2-b}).} We note that any balanced word either ends with $xx$ or with $xy$, $x\neq y$, $x,y\in\{0,1,2\}$. Clearly, there are $3b_n$ words in $\mathcal{B}_{n+1}$ ending with 00 or 11 or 22. Next, we prove that there are $6(b_n-1)$ words in $\mathcal{B}_{n+1}$ ending with $xy$, $x\neq y$. To generate any such word we use the function $h_{x,y}$ that 
\begin{itemize}
\item takes any word $w$ in $\mathcal{B}_{n}$ different from $zz\cdots z$ for $z\neq x,y$, then
\item replaces the leftmost occurrence of an element in $\{x,y\}$ in $w$ by the opposite from the same set, and
\item attaches $xy$ to the obtained word.
\end{itemize}
For example, for $x=0$ and $y=1$, $h_{0,1}(22012120)=2211212001$. Note that $h_{x,y}$ is well-defined because its outcome is a balanced word. 

Next note that $h_{x,y}$ is injective. Indeed, different words, say $w_1$ and $w_2$, would either get different endings, and thus will result in different outcomes, or they will have the same ending. In the later case, if such an ending is $xx$ then we clearly have different outcomes. Otherwise, the ending is $xy$ for $x\neq y$. Consider the smallest $i\geq 1$ such that the $i$-th element in $w_1$ is different from the $i$-th element in $w_2$ (such an $i$ exists).  Thus, $w_1=x_1x_2\cdots x_{i-1}a\cdots$ and $w_2=x_1x_2\cdots x_{i-1}b\cdots$ and $a\neq b$. If none of the $a$ and $b$ would be changed by $h_{x,y}$, the outcome words are different. On the other hand if, without loss of generality $a$ will be changed, then $x_1x_2\cdots x_{i-1}=zz\cdots z$ for $z\neq x,y$, and thus either 
\begin{itemize}
\item $b=z$ in which case we obtain $h_{x,y}(w_1)\neq h_{x,y}(w_2)$ as $a\neq z$, or
\item $b$ in $w_2$ will be changed as well, so that $h_{x,y}(w_1)$ and $h_{x,y}(w_2)$ differ in position $i$.
\end{itemize}
So, in either case, $h_{x,y}(w_1)\neq h_{x,y}(w_2)$.

We finally prove surjectivity of $h_{x,y}$ that completes our proof of (\ref{rec-2-b}). Given a word $w_1w_2\cdots w_{2n+2}\in\mathcal{B}_{n+1}$ we replace the leftmost occurrence of a letter in $\{w_{2n+1}w_{2n+2}\}$ in $w_1w_2\cdots w_{2n}$ by the opposite from the same set so that the resulting word belongs to $\mathcal{B}_n$. For example, $h_{x,y}^{-1}(0021210202)=20212102$. \\

\begin{theorem}
There is a bijection between $\mathcal{RG}^{(3)}_{n+1}$ and $\mathcal{B}_n$.
\end{theorem}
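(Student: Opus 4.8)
The plan is to exhibit an explicit bijection $\Theta:\mathcal{RG}^{(3)}_{n+1}\rightarrow\mathcal{B}_n$ by composing the two ingredients already built in this section. First, from the proof of Theorem~\ref{thm-xi} (specialized to $p=3$), every non-empty graph $G\in\mathcal{RG}^{(3)}_{n+1}$ is uniquely determined by the data $(b^*,g_{b^*},\ldots,g_{n-1},f_1,\ldots,f_{n-b^*-1})$ with $g_{b^*}\neq 0$, and this data is in turn recorded faithfully by the $p$-Riordan word $\xi(G)\in\mathcal{W}^{(3)}_n$; the empty graph corresponds to the words $a_{0,0}\cdots a_{0,0}a_{i,0}$ or $a_{0,0}\cdots a_{0,0}$. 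So it suffices to biject $\mathcal{W}^{(3)}_n$ with $\mathcal{B}_n$. Since $|\mathcal{W}^{(3)}_n|=\tilde r_{n+1}=b_n$ (both satisfy the recursion $x_{n+1}=3x_n+6(x_n-1)$, by (\ref{rpn-rec-formula}) resp. (\ref{rec-2-b}), with the same base case), a bijection exists; the point is to make it recursive and compatible with the combinatorial explanations of (\ref{rec-1-o}) and (\ref{rec-2-b}) already given.

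The key steps, in order, are as follows. (1) Set up the base case: describe $\Theta$ on $\mathcal{RG}^{(3)}_1$ (one vertex, empty graph) mapping to $\varepsilon\in\mathcal{B}_0$, and check $|\mathcal{RG}^{(3)}_2|=3=b_1$ with an explicit listing. (2) Define $\Theta$ inductively using the two parallel decompositions: a graph $G_{n+1}(g,f)\in\mathcal{RG}^{(3)}_{n+1}$ is obtained from a graph $G_n\in\mathcal{RG}^{(3)}_n$ by choosing $g_{n-1}\in\{0,1,2\}$ and $f_{n-b^*-1}\in\{0,1,2\}$ (subject to the proviso that when $f_{n-b^*-1}\neq 0$ the graph $G_n$ must be non-empty), exactly as in the explanation of (\ref{rec-1-o}); on the word side $\mathcal{B}_{n+1}$ is obtained from $\mathcal{B}_n$ either by appending $xx$ (three choices of $x$, no constraint) or by applying $h_{x,y}$ (six ordered pairs, and $w\in\mathcal{B}_n$ restricted away from the constant word $zz\cdots z$, $z\notin\{x,y\}$), as in the explanation of (\ref{rec-2-b}). (3) Match the two $3+6$ splittings: send the "$g_{n-1}=j$, $f_{n-b^*-1}=0$'' extensions (the $3b_n$-block) to the "append $xx$'' words, and the "$g_{n-1}=j$, $f_{n-b^*-1}=k\neq 0$'' extensions (the $6(b_n-1)$-block) to the $h_{x,y}$-images, choosing a bijection between the index set $\{0,1,2\}\times\{1,2\}$ (pairs $(j,k)$) and the six ordered pairs $(x,y)$, $x\neq y$, and recursively applying $\Theta$ to the underlying $G_n$ to get the word $w\in\mathcal{B}_n$ that $h_{x,y}$ acts on. (4) Conclude by induction that $\Theta$ is a well-defined bijection, using injectivity of $\xi$ (Theorem~\ref{thm-xi}), injectivity of each $h_{x,y}$ and the counting identity; alternatively simply note $|\mathcal{RG}^{(3)}_{n+1}|=|\mathcal{B}_n|$ and that the construction is reversible step by step, with $h_{x,y}^{-1}$ already described.

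The main obstacle is making the recursive matching genuinely well-defined rather than merely numerically consistent: one must verify that the "excluded'' cases line up on both sides. Concretely, the pair $(j,k)$ with $k\neq 0$ is forbidden precisely when $G_n$ is empty, i.e. when $\tilde g=0$; under the inductive correspondence the empty graphs in $\mathcal{RG}^{(3)}_n$ map (via $\xi$) to the $p$-Riordan words of the form $a_{0,0}\cdots a_{0,0}$ and $a_{0,0}\cdots a_{0,0}a_{i,0}$, and one needs these to be exactly the words of $\mathcal{W}^{(3)}_n$ whose $\Theta$-image is the constant word $zz\cdots z$ for the relevant $z\notin\{x,y\}$ — so the labelling of the three constant words $000\cdots0,\ 111\cdots1,\ 222\cdots2$ by "which extension is forbidden'' has to be threaded carefully through the choice of bijection $(j,k)\leftrightarrow(x,y)$. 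Getting that bookkeeping to close up is the crux; once it does, well-definedness, injectivity, and surjectivity all follow from the corresponding properties established for $\xi$ and for the maps $h_{x,y}$ in the explanations of (\ref{rec-1-o}) and (\ref{rec-2-b}), and the theorem follows.
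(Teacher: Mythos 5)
Your overall strategy is the paper's strategy: build the bijection recursively by matching the $3+6$ decomposition of $\mathcal{RG}^{(3)}_{n+1}$ from the explanation of (\ref{rec-1-o}) (choices of $g_{n-1}$ with $f_{n-b^*-1}=0$, versus the six pairs $(g_{n-1},f_{n-b^*-1})$ with $f_{n-b^*-1}\neq 0$ over a non-empty $G_n$) with the $3+6$ decomposition of $\mathcal{B}_{n+1}$ from the explanation of (\ref{rec-2-b}) (append $xx$, versus apply $h_{x,y}$). However, at the point you yourself identify as the crux, there is a genuine gap: you propose to make the excluded cases line up by a careful choice of the correspondence between the six pairs $(j,k)$, $k\neq 0$, and the six ordered pairs $(x,y)$, $x\neq y$. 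No such choice exists. On the graph side, every one of the six extensions with $f_{n-b^*-1}\neq 0$ excludes the \emph{same} object, namely the empty graph $G_n(0,0)$, whose word is forced (by the append-$xx$ rule) to be $00\cdots 0$. On the word side, the map $h_{x,y}$ excludes the constant word $zz\cdots z$ with $z\notin\{x,y\}$, and this excluded word \emph{varies} with the pair: the two pairs with $\{x,y\}=\{1,2\}$ exclude $00\cdots 0$, but the four pairs with $0\in\{x,y\}$ exclude $11\cdots 1$ or $22\cdots 2$, which correspond to perfectly legitimate non-empty graphs. So at most two of the six slots can be made to match by relabelling, and for the remaining four your $\Theta$ is either undefined (on the graphs mapping to $11\cdots 1$ or $22\cdots 2$) or fails to be surjective onto the words ending in the corresponding $xy$.

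The paper closes exactly this hole with an extra device that your proposal does not contain: for those four pairs, when the underlying word is the forbidden constant word ($22\cdots 2$ for endings $01$ and $10$, $11\cdots 1$ for endings $02$ and $20$), the image is redefined to be $h_{x,y}(00\cdots 0)$ followed by $xy$ (e.g.\ $100\cdots 01$), which is available precisely because the input $00\cdots 0$ corresponds to the empty graph and is therefore never used by the rule otherwise; injectivity and coverage of all words ending in $xy$ are then checked case by case, and the three inductive assumptions about the constant words are maintained. Your remaining steps (base case, induction, invertibility via $h_{x,y}^{-1}$) are fine and agree with the paper, but without this redirection trick (or an equivalent repair) the construction is not well-defined, so the proof as proposed does not go through.
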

\begin{proof}
We describe recursively a bijective map $\eta:\mathcal{RG}^{(3)}_{n+1}\rightarrow\mathcal{B}_n$ based on our proofs of relations (\ref{rec-1-o}) and (\ref{rec-2-b}) with the base cases $\eta(G_2(0,0))=00$, $\eta(G_2(1,0))=11$ and $\eta(G_2(2,0))=22$. 

Suppose that $\eta$ is already defined for $\mathcal{RG}^{(3)}_{n}$, $n\geq 2$, and $\eta(G_n(g,f))=w_1w_2\cdots w_{2n-2}$. Moreover, we make the following assumptions, which work for the base case and will work for $n+1$ as well: 
\begin{itemize}
\item $\eta(G_n(0,0))=00\cdots 0$;
\item  $\eta(G_n(1+t+\cdots + t^{n-2},0))=11\cdots 1$;
\item $\eta(G_n(2+2t+\cdots + 2t^{n-2},0))=22\cdots 2$.
\end{itemize}
Then, adding $g_{n-1}$ and $f_{n-b^*-1}$ into consideration results in a graph $G\in \mathcal{RG}^{(3)}_{n+1}$ obtained from $G_n(g,f)$ by adding one vertex, and we define $\eta(G)$ by the following rules:
\begin{itemize}
\item If $f_{n-b^*-1}=0$ or $\tilde{f}=0$ (in the case $b^*$ is not defined) then $\eta(G)=w_1w_2\cdots w_{2n-2}g_{n-1}g_{n-1}$.
\item If $f_{n-b^*-1}=1$, then we can take any $G_n(g,f)$ except for the one with  $\tilde{g}= 0$ corresponding to $w_1w_2\cdots w_{2n-2}=00\cdots 0$, and consider the following subcases:  
\begin{itemize}
\item if $g_{n-1}=0$ then $\eta(G)=h(w_1w_2\cdots w_{2n-2})01$ if $w_1w_2\cdots w_{2n-2}\neq 22\cdots 2$ and $\eta(G)=h(00\cdots 0)01=100\cdots 01$ otherwise; note that this rule is injective, well-defined for any $G_n(g,f)$ different from an empty graph, and covers all words in $\mathcal{B}_n$ ending with 01.
\item if $g_{n-1}=1$ then $\eta(G)=h(w_1w_2\cdots w_{2n-2})12$, which is well-defined since $w_1w_2\cdots w_{2n-2}\neq 00\cdots 0$, injective, and covers all words in $\mathcal{B}_n$ ending with 12.
\item if $g_{n-1}=2$ then $\eta(G)=h(w_1w_2\cdots w_{2n-2})02$ if $w_1w_2\cdots w_{2n-2}\neq 11\cdots 1$ and $\eta(G)=h(00\cdots 0)02=200\cdots 02$ otherwise; note that this rule is injective, well-defined for any $G_n(g,f)$ different from an empty graph, and covers all words in $\mathcal{B}_n$ ending with 02.\end{itemize}
\item If $f_{n-b^*-1}=2$, then we can take any $G_n(g,f)$ except for the one with  $\tilde{g}= 0$ corresponding to $w_1w_2\cdots w_{2n-2}=00\cdots 0$, and consider the following subcases:  
\begin{itemize}
\item if $g_{n-1}=0$ then $\eta(G)=h(w_1w_2\cdots w_{2n-2})10$ if $w_1w_2\cdots w_{2n-2}\neq 22\cdots 2$ and $\eta(G)=h(00\cdots 0)10=100\cdots 010$ otherwise; note that this rule is injective, well-defined for any $G_n(g,f)$ different from an empty graph, and covers all words in $\mathcal{B}_n$ ending with 10.
\item if $g_{n-1}=1$ then $\eta(G)=h(w_1w_2\cdots w_{2n-2})21$, which is well-defined since $w_1w_2\cdots w_{2n-2}\neq 00\cdots 0$, injective, and covers all words in $\mathcal{B}_n$ ending with 21.
\item if $g_{n-1}=2$ then $\eta(G)=h(w_1w_2\cdots w_{2n-2})20$ if $w_1w_2\cdots w_{2n-2}\neq 11\cdots 1$ and $\eta(G)=h(00\cdots 0)20=200\cdots 020$ otherwise; note that this rule is injective, well-defined for any $G_n(g,f)$ different from an empty graph, and covers all words in $\mathcal{B}_n$ ending with 020.\end{itemize}
\end{itemize}
The fact that $\eta$ is well-defined and bijective follows from our remarks above. For example, $w=\eta(G_5(1+2t^2+t^3,t^2))$ can be recursively calculated from $\eta(G_4(1+2t^2+t^3,t^2))$ and $g_{n-1}=g_4=0$ and $f_{n-b^*-1}=f_{5-1-1}=f_3=0$ so that $w$ will end with 00. In turn, $w'=\eta(G_4(1+2t^2+t^3,t^2))$ can be recursively calculated from $\eta(G_3(1+2t^2+t^3,t^2))$ and $g_3=1$ and $f_{2}=1$, so that $w'$ will end with 12. And so on. One can check that $w=2100221200$.
\end{proof}

\section{Concluding remarks}\label{concl-sec}
This paper provides a convenient encoding of $p$-Riordan graphs in terms of $p$-Riordan words, and explains combinatorially some links between (oriented) Riordan graphs and balanced words (equivalently, certain closed walks in a cube) and pattern-avoiding permutations. 

We leave it as an open question to explain combinatorially that the sequence $(r_n)_{n\ge0}$  counting Riordan graphs and given by (\ref{Riord-rn}) also counts permutations in $S_n(4321,4123)$, or in $S_n(4321, 3412)$, or in $S_n(4123,3214)$, or in $S_n(4123,2143)$ that were enumerated in \cite{KW}; also see A047849 in \cite{oeis}. A natural approach would be to use 2-Riordan words that are in bijection with Riordan graphs to establish a correspondence in question. However, enumeration of pattern-avoiding permutations in \cite{KW} is rather involved (it uses the notion of an {\em active site}) and e.g.\ cannot be translated that easily into recursion (\ref{spn-formula}) for $p=2$.

\section{Acknowledgments} The work of the second author was supported by the National Research Foundation of Korea (NRF) grant funded by the Ministry of Education of Korea (NRF-2019R1I1A1A01044161).

 \end{document}